\documentclass[12pt]{amsart}

\usepackage{amsmath, amssymb}
\usepackage{array}
\usepackage[frame,cmtip,arrow,matrix,line,graph,curve]{xy}
\usepackage{graphpap, color, paralist, pstricks}
\usepackage[mathscr]{eucal}
\usepackage[pdftex]{graphicx}
\usepackage[pdftex,colorlinks,backref=page,citecolor=blue]{hyperref}
\usepackage{tikz}
\usepackage{tikz-cd}

\usepackage{enumitem}

\setlength{\oddsidemargin}{0in}
\setlength{\evensidemargin}{0in}
\setlength{\marginparwidth}{0in}
\setlength{\marginparsep}{0in} 
\setlength{\marginparpush}{0in}
\setlength{\topmargin}{0in}
\setlength{\headheight}{12pt}
\setlength{\headsep}{10pt}
\setlength{\footskip}{.3in}
\setlength{\textheight}{8.5in}
\setlength{\textwidth}{6.5in}
\setlength{\parskip}{4pt}
\linespread{1.1}

\newtheorem{theorem}{Theorem}[section]

\newtheorem{proposition}[theorem]{Proposition}
\newtheorem{corollary}[theorem]{Corollary}
\newtheorem{lemma}[theorem]{Lemma}

\theoremstyle{definition}
\newtheorem{definition}[theorem]{Definition}

\newtheorem{remark}[theorem]{Remark}

\newtheorem{maintheorem}{Theorem}

\renewcommand{\AA}{\mathbb{A} }

\newcommand{\EE}{\mathbb{E} }
\newcommand{\FF}{\mathbb{F} }
\newcommand{\NN}{\mathbb{N} }
\newcommand{\PP}{\mathbb{P} }
\newcommand{\QQ}{\mathbb{Q} }

\newcommand{\ZZ}{\mathbb{Z} }

\newcommand{\bp}{\mathbf{p} }

\title{The number of solutions of a random system of polynomials over a finite field}
\author{Ritik Jain}
\address{Ritik Jain, Department of Mathematics, University of Connecticut, Storrs, CT 06269}
\email{ritik.jain@uconn.edu}

\begin{document}
\begin{abstract}
We study the probability that a random system of $m$ polynomials in $n$ variables over a finite field $\FF_q$ has exactly $k$ solutions. We show that the distribution over $k$ is binomial with parameters $q^n, 1/q^m$. More generally, in the case that the polynomials have coefficients in a general finite commutative ring $R$, we find that the expected number of solutions is $|R|^{n-m}$. In particular, if $n=m$, this implies that the system of polynomials has exactly one solution on average, regardless of $R$.
\end{abstract}
\maketitle

\section{Introduction}
Fix a collection of uniformly random polynomials $f_1, \cdots, f_m$ over $\FF_q$ in $n$ variables. We study the distribution of the random variable
\[
N(f_1,\dots,f_m)
\;=\;
\#\left\{\mathbf{x}\in \FF_q^n \;\middle|\; f_1(\mathbf{x})=\cdots=f_m(\mathbf{x})=0 \right\},
\]
which encodes the number of distinct solutions of the system. Assuming the sample space for the $f_i$ satisfies the mild largeness condition given in Definition \ref{defn:spaces}, (a), we show (Theorem \ref{thm:main1}) that
\[
N(f_1,\dots,f_m) \sim \mathrm{Bin}\left( q^n, \frac{1}{q^m} \right).
\]
In particular, this implies that
\[
\EE(N(f_1,\dots,f_m)) = q^{n-m}, \qquad \mathrm{Var}(N(f_1,\dots,f_m))= q^{n-m}(1-q^{-m}),
\]
where the expectation is $1$ exactly when $n = m$.
We will also consider systems of $m$ uniformly random polynomials $f_1, \cdots , f_m$ in $n$ variables over a general finite commutative ring $R$. In this case, only assuming the sample space for the $f_i$ is an $R$-module containing $1_R$, we show that the expected number of solutions to this system is $|R|^{n-m}$ (Theorem \ref{thm:ringmain1}).

We now introduce some notation and basic definitions. Throughout the paper, $\FF_q$ denotes the finite field of order $q=p^r$, and $R$ denotes a finite commutative ring with unity. We further denote the ring of polynomials in $n$ variables $x_1,\dots,x_n$ with coefficients in $R$ by $R[\overline{X}_n]$.

\begin{definition}
A point $\mathbf{p}\in R^n$ is called a \emph{zero} of a polynomial $f\in R[\overline{X}_n]$ if $f(\mathbf{p})=0$. If $f_1(\mathbf{p})=\cdots=f_m(\mathbf{p})=0$, then $\mathbf{p}$ is called a \emph{solution} of a system $f_1,\dots,f_m \in R[\overline{X}_n]$.
\end{definition}

In Sections \ref{sec:finitefieldsec} and \ref{sec:ringsec}, we will consider random polynomials sampled uniformly from $\FF_q$-subspaces and $R$-submodules of $\FF_q[\overline{X}_n]$ and $R[\overline{X}_n]$ respectively. When such a space is finite, “uniformly random” means with respect to the normalized counting measure; when it is infinite, such statements are interpreted in terms of asymptotic density, which will be discussed in more depth in Section \ref{sec:prelim}.

The following structural conditions on these spaces play a central role.

\begin{definition}\label{defn:spaces}
\begin{enumerate}[label=(\alph*), leftmargin=2em]
    \item An $\FF_q$-vector space $V \subseteq \FF_q[\overline{X}_n]$ is said to \emph{contain functions} if every function $\FF_q^n \to \FF_q$ is represented by a polynomial in $V$.
    \item A free $R$-module $K \subseteq R[\overline{X}_n]$ is said to \emph{extend $R$} if it contains the constant polynomials, equivalently if $1_R \in K$.
\end{enumerate}
\end{definition}
The first condition is equivalent to the surjectivity of the evaluation map
\[
V \longrightarrow \FF_q^{\FF_q^n}, \qquad
f \longmapsto \bigl(f(\mathbf{x})\bigr)_{\mathbf{x}\in\FF_q^n}.
\]
As we will see, the first condition is precisely what guarantees Theorem \ref{thm:main1}. On the other hand, the second condition ensures that the evaluation map $K \to R$ is surjective at each point, which is essential to the proof of Theorem \ref{thm:ringmain1}.

There are many natural examples of such spaces. For one, $\FF_q[\overline{X}_n]$ itself contains functions, as does the subspace of polynomials in $\FF_q[\overline{X}_n]$ with total degree at most $d \geq n(q-1)$, and the polynomials of degree at most $d \geq q-1$ in each variable. Moreover, replacing $\FF_q$ with a general ring $R$, all such spaces also extend $R$. More generally, for any $d \in \NN$, the $R$-submodule of polynomials with total degree at most $d$ extends $R$, as does the $R$-submodule of polynomials with degree at most $d$ in each variable.

The following two theorems are the main results of the paper.

\setcounter{maintheorem}{0}
\begin{maintheorem}\label{thm:main1}
Let $V \subseteq \FF_q[\overline{X}_n]$ be a vector space which contains functions. Then the number of distinct common zeros of a system of random polynomials $f_1,\dots,f_m \in V$ is binomial with parameters $q^n$ and $1/q^m$.
\end{maintheorem}

\setcounter{maintheorem}{1}
\begin{maintheorem}\label{thm:ringmain1}
Let $R$ be a finite commutative ring and let $K \subseteq R[\overline{X}_n]$ extend $R$. Then the expected number of distinct common zeros of a system of random polynomials $f_1,\dots,f_m \in K$ is
\[
|R|^{\,n-m}.
\]
\end{maintheorem}

When the sample spaces are infinite, these results are interpreted in terms of asymptotic density rather than probability. We discuss this distinction in Section~\ref{sec:prelim}.

\subsection{Related Work} There has been much interesting work on the distribution of the number of zeros of random polynomials. The question was first considered over the real numbers by Bloch and Polya \cite{BP32}, who studied polynomials with uniform coefficients $\{0, \pm 1 \}$. Following this, a breakthrough paper of Kac \cite{Kac43} showed that the expected number of real roots of a random polynomial of degree $n$ with independent standard Gaussian coefficients grows like $\frac{2}{\pi}\log n$ as $n \to \infty$. This work inspired a large body of subsequent research: one can find an excellent survey of these results in \cite{EK95}. For more recent work, see for instance \cite{DPSZ02, NV21}. 

Similar questions were considered over $p$-adic fields by \cite{Eva06, BCFG22}. Since $\ZZ_p$ arises as the inverse limit of the rings $\ZZ/p^k\ZZ$, it is natural to ask how probabilistic results over $\ZZ/p^k\ZZ$, in particular Theorem \ref{thm:ringmain1}, relate to corresponding questions over $\ZZ_p$.

Over finite fields, some prior work on the zeros of random polynomials include \cite{Odo92, Leo06, JMW25}. In \cite{Leo06}, it was shown that the distribution of the number of zeros of a random polynomial $f \in \FF_q[x]$ converges to a Poisson distribution with parameter $1$ as $q \to \infty$. Generalizing this result, in \cite{JMW25}, the distribution of the number of zeros of a random polynomial in $n$ variables over $\FF_q$ was computed. In the case $n = 1$, the result of Leont\'ev was recovered by taking the limit $q \to \infty$.

We greatly improve the results of \cite{Leo06, JMW25} in two ways. First, in Theorem \ref{thm:main1}, we compute the distribution of the number of solutions to a \emph{system} of random polynomials: moreover, we sample each polynomial from a maximally general class of sample spaces. One notable consequence is that the distribution of the number of solutions of a random system of $n$ polynomials over $\FF_q$ in $n$ variables is asymptotically Poisson with parameter $1$ as $q \to \infty$ (Corollary \ref{cor:poisson}).
Second, in Theorem \ref{thm:ringmain1}, we also compute the expectation for a system of polynomials over an arbitrary finite commutative ring.

\subsection{Organization of the paper} In Section \ref{sec:prelim}, we discuss the algebraic and probabilistic background for our discussion. In Section \ref{sec:finitefieldsec}, we prove Theorem \ref{thm:main1} using an elementary linear-algebraic argument. In Section \ref{sec:ringsec}, we prove Theorem \ref{thm:ringmain1} using an algebraic and combinatorial argument. Finally, in Section \ref{sec:conclusion}, we summarize the main results and give some suggestions for future work.

\section{Preliminaries}\label{sec:prelim}

In this section, we will outline the framework for the remainder of our discussion. 

We begin by recalling some algebraic facts. First, for a commutative ring $R$, an $R$-module $M$ is said to be \emph{free} if it has a basis, i.e. it is isomorphic to the direct sum $R^{(I)}$ for some index set $I$. In this case, $|I|$ is called the \emph{rank} of $M$, and is denoted by $\mathrm{rank} \ M$. If $|I|$ is finite, then $M$ is said to be \emph{finitely generated}, else it is said to be \emph{infinitely generated}.

Now, for a free $R$-module $M \subseteq R[\overline{X}_n]$ with rank $k \in \NN$, define 
\begin{equation}\label{eqn:X(M,n,m)}
X_R(M,n,m): M^m \to \ZZ_{\ge 0}
\end{equation} 
to be the function which, given a collection of polynomials $(f_1, \cdots , f_m) \in M^m$, outputs its number of distinct solutions. If $M$ is finitely generated, we can make $X_R(M,n,m)$ a random variable by equipping it with the normalized counting measure $\mu$ on $M^m$. Note that the choice of measure $\mu$ works because $M$ is finite by the finiteness of $R$, and it is canonical, as $\mu$ is the unique Haar probability measure on the additive group $M^m$ equipped with the discrete topology.

To extend this setup to the case of $M$ being infinitely generated, we will need to adopt a density function rather than a probability measure, as there is no uniform measure on an infinite set. For $t \in \ZZ_{\ge 0}$, let $M_t \subseteq M$ denote the $R$-submodule consisting of polynomials of total degree at most $t$. Then, note that each $M_t$ is finite and free over $R$, and the sequence $\{M_t\}_{t\ge 0}$ exhausts $M$. Then, given a subset $A \subseteq M^m$, we define its \emph{density} by
\begin{equation}\label{eq:mdensity}
d_m(A)
\;=\;
\lim_{t\to\infty}
\frac{|A \cap M_t^m|}{|M_t^m|},
\end{equation}
provided the limit exists.

Note that if $M$ extends $R$, so will the $M_t$, and if $M$ contains functions, then so too will the $M_t$ for $t$ sufficiently large. This allows us to translate results proved in the finite-dimensional setting by only considering polynomials in $M^m$ with total degree at most $t$, then taking the limit $t \to \infty$.

We also note that $d_m$ is only finitely additive, so it is not a measure in the usual sense. Therefore, $(X_R(M,n,m), d_m)$ is not a random variable if $M$ is infinite: instead, it is simply a function which measures the density of systems of polynomials in $M^m$ with $k$ distinct solutions for all $k$ with respect to $d_m$. Nevertheless, by abuse of notation, we will treat it as such in the proofs of Theorems \ref{thm:main1} and \ref{thm:ringmain1} in order to unify the treatment of the finitely-generated and infinitely-generated cases.


\section{Proof of Theorem \ref{thm:main1}}\label{sec:finitefieldsec}
In this section, we will prove Theorem \ref{thm:main1}, hence computing the distribution of the number of common zeros of a system of random polynomials $f_1, \cdots , f_m$ sampled from a vector space $V \subseteq \FF_q[\overline{X}_n]$ which contains functions.

Recall that a vector space $V \subseteq \FF_q[\overline{X}_n]$ is said to contain functions if every function $\FF_q^n \to \FF_q$ is represented by a polynomial in $V$. After proving Lemma \ref{lem: lem1}, we will give a necessary-and-sufficient criterion (Proposition \ref{prop:nec}) for this condition to hold.

\begin{lemma}\label{lem: lem1}
Every function $\FF_q^n \to \FF_q$ is uniquely represented by a 
polynomial in the space $\FF_q[\overline{X}_n]_{\overline{q-1}} \subset \FF_q[\overline{X}_n]$ of polynomials with degree at most $q-1$ with respect to each variable $x_i$.
\end{lemma}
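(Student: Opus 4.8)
The plan is to prove the statement by a counting argument on vector-space dimensions. The key observation is that the set of \emph{all} functions $\FF_q^n \to \FF_q$ forms an $\FF_q$-vector space under pointwise operations, and this space has dimension exactly $q^n$ over $\FF_q$, since a function is determined by its $q^n$ independent values on the points $\bp_1, \ldots, \bp_{q^n}$. On the other side, the space $\FF_q[\overline{X}_n]_{\overline{q-1}}$ of polynomials with degree at most $q-1$ in each variable has a basis given by the monomials $x_1^{a_1}\cdots x_n^{a_n}$ with $0 \le a_i \le q-1$ for each $i$; there are exactly $q^n$ such monomials, so this space also has dimension $q^n$ over $\FF_q$.

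\emph{The evaluation map.}
First I would consider the natural evaluation map $\Phi$ that sends a polynomial $f \in \FF_q[\overline{X}_n]_{\overline{q-1}}$ to the function $\bp \mapsto f(\bp)$ on $\FF_q^n$. This map is $\FF_q$-linear, and its domain and codomain both have dimension $q^n$. Therefore it suffices to show that $\Phi$ is either injective or surjective, as either one forces $\Phi$ to be a bijection by the rank-nullity theorem; a bijective linear map is precisely the assertion that every function is represented by a \emph{unique} polynomial in the given space.

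\emph{Injectivity via the vanishing polynomial.}
The cleanest route is to establish injectivity, i.e.\ that the kernel of $\Phi$ is trivial: the only polynomial in $\FF_q[\overline{X}_n]_{\overline{q-1}}$ that vanishes at every point of $\FF_q^n$ is the zero polynomial. I would argue this by induction on the number of variables $n$. The base case $n=1$ is the classical fact that a nonzero single-variable polynomial over a field of degree at most $q-1$ has at most $q-1$ roots, hence cannot vanish at all $q$ elements of $\FF_q$. For the inductive step, I would write $f \in \FF_q[\overline{X}_n]_{\overline{q-1}}$ as a polynomial in $x_n$ with coefficients in $\FF_q[\overline{X}_{n-1}]_{\overline{q-1}}$, fix the first $n-1$ coordinates at an arbitrary point of $\FF_q^{n-1}$, and apply the single-variable case to conclude each coefficient vanishes at that point; since the point was arbitrary, the inductive hypothesis then forces each coefficient to be the zero polynomial.

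\emph{Main obstacle.}
The main subtlety, rather than a deep obstacle, is being careful with the bookkeeping in the inductive step: one must verify that expressing $f$ as $\sum_{j=0}^{q-1} g_j(x_1,\ldots,x_{n-1})\, x_n^j$ keeps each coefficient $g_j$ within the degree-at-most-$(q-1)$-per-variable class so that the inductive hypothesis applies, and that the degree-at-most-$q-1$ condition on $x_n$ is exactly what the single-variable base case requires. Once injectivity is in hand, uniqueness of representation is immediate, and existence (surjectivity) follows from the equality of dimensions, completing the proof.
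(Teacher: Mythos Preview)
Your proof is correct and shares the paper's overall architecture: define the evaluation map from $\FF_q[\overline{X}_n]_{\overline{q-1}}$ to the space of all functions $\FF_q^n \to \FF_q$, observe that domain and codomain have the same size (you phrase this as equal dimension $q^n$ over $\FF_q$; the paper counts cardinalities $q^{q^n}$), and then establish injectivity to conclude bijectivity.

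The substantive difference lies in how injectivity is argued. The paper asserts that a polynomial inducing the zero function must lie in the ideal $I = (x_1^q - x_1, \ldots, x_n^q - x_n)$, and then notes that a nonzero element of $I$ has degree at least $q$ in some variable, contradicting membership in $\FF_q[\overline{X}_n]_{\overline{q-1}}$. You instead prove injectivity directly by induction on $n$, reducing to the one-variable fact that a nonzero polynomial of degree at most $q-1$ over a field has at most $q-1$ roots. Your route is more elementary and self-contained: the paper's claim that the vanishing ideal of $\FF_q^n$ equals $I$ is stated without justification, and the standard proof of that fact is essentially the inductive root-counting argument you give. On the other hand, the paper's formulation via $I$ feeds directly into the subsequent proposition characterizing which subspaces $V$ ``contain functions,'' so its phrasing is chosen to dovetail with the rest of the section.
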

\begin{proof}
Let $S$ denote the set of all functions $\FF_q^n \to \FF_q$. We will show that the function $\varphi: \FF_q[\overline{X}_n]_{\overline{q-1}} \to S$ is injective, where $\varphi(f): \FF_q^n \to \FF_q$ is the function induced by a polynomial $f$. Suppose that $f,g \in \FF_q[\overline{X}_n]_{\overline{q-1}}$ induce the same function. Then, $f-g$ induces the zero function, that is, $f-g$ is a member of the ideal
\begin{equation}\label{eqn:eqnI}
 I := (x_1^q - x_1, \cdots, x_n^q - x_n). 
\end{equation}
But if $f-g \in I$, then the degree of $f-g$ is at least $q$ with respect to some variable $x_i$, or $f-g = 0$. Since every polynomial in the ring $\FF_q[\overline{X}_n]_{\overline{q-1}}$ has degree at most $q-1$ with respect to each variable, we have $f-g = 0$, or $f = g$, implying that $\varphi$ is injective.

Now, $\FF_q[\overline{X}_n]_{\overline{q-1}}$ has dimension $(q-1+1)^n = q^n$, therefore $|\FF_q[\overline{X}_n]_{\overline{q-1}}| = q^{q^n}$. On the other hand, $|S| = |\FF_q|^{|\FF_q^n|} = q^{q^n}$, hence $|\FF_q[\overline{X}_n]_{\overline{q-1}}| = |S|$. Since $\varphi$ is an injective function between finite sets of equal cardinality, it must be a bijection.
\end{proof}

In some sense, $\FF_q[\overline{X}_n]_{\overline{q-1}}$ is the "smallest" vector space which contains functions. 

\begin{proposition}\label{prop:nec}
A vector space $V \subseteq \FF_q[\overline{X}_n]$ contains functions if and only if for every $f \in \FF_q[\overline{X}_n]_{\overline{q-1}}$, there exists a polynomial $g \in V$ such that $f- g \in I$, where $I$ is defined as in (\ref{eqn:eqnI}).
\end{proposition}

\begin{proof}
First, if $V$ contains functions, then for any $f \in \FF_q[\overline{X}_n]_{\overline{q-1}}$, there exists a polynomial $g \in V$ which induces the same function $\FF_q^n \to \FF_q$ as $f$. Therefore, $f-g \in I$ as desired. On the other hand, if for every $f \in \FF_q[\overline{X}_n]_{\overline{q-1}}$, there exists a polynomial $g \in V$ such that $f- g \in I$, then every function $f:\FF_q^n \to \FF_q$ is represented by $g \in V$.
\end{proof}


\begin{lemma}\label{lem: lem2}
Let $\bp_1, \cdots \bp_{r}$ be distinct points in $\FF_q^n$, and let $V \subseteq \FF_q[\overline{X}_n]$ be a vector space. Then, the linear map
\[
\phi_r: V \to \FF_q^{r} \qquad \phi_r(f) = (f(\bp_{1}), \cdots , f(\bp_{r}))
\]
is surjective for all $1 \le r \le q^n$ exactly when $V$ contains functions.
\end{lemma}

\begin{proof}
First, assume $V$ contains functions, and $r = q^n$. Then, by definition, $V$ contains a polynomial representing every function $\FF_q^n \to \FF_q$. In particular, for any $(a_1, \cdots , a_{q^n}) \in \FF_q^{q^n}$, there exists a polynomial $f$ such that
\[
f(\bp_i) = a_i
\]
for all $1 \le i \le q^n$. It follows immediately that $\phi_{q^n}$ is surjective. Now, suppose that $r < q^n$. Then, for any point $(a_1, \cdots , a_r) \in \FF_q^r$, as shown above, there exists a polynomial $f \in V$ such that
\begin{equation}\label{eqn: surj}
f(\bp_i) = a_i
\end{equation}
for all $1 \le i \le r$, and 
$
f(\bp) = \omega
$
for all $\bp \in \FF_q^n \setminus \{ \bp_1, \cdots \bp_{r} \}$ and an arbitrary $\omega \in \FF_q$. In particular, (\ref{eqn: surj}) implies that $\phi_r$ is surjective. On the other hand, assuming $V$ does not contain functions, the map $\phi_{q^n}$ is clearly not surjective, therefore $\phi_r$ is not surjective for all $1 \le r \le q^n$. 
\end{proof}

\begin{lemma}\label{lem: lem3}
Let $f \in V$ be a random polynomial sampled from  a finite-dimensional vector space $V \subset \FF_q[\overline{X}_n]$ which contains functions. Then, for any $\bp \in \FF_q^n$,
\[
P \left( f(\bp) = 0 \right) = \frac{1}{q}.
\]
\end{lemma}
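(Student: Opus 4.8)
The plan is to reduce the claim about $P(f(\bp)=0)$ to a counting statement about the fiber of a single evaluation map, exactly in the spirit of Lemmas \ref{lem: ringvanishingprob} and \ref{lem: ringvanishingprob2}, but now exploiting the extra structure afforded by $V$ being a vector space over the field $\FF_q$ (rather than merely a module) and by the \emph{contains functions} hypothesis. Fix the point $\bp \in \FF_q^n$ and consider the single-point evaluation map $\phi \colon V \to \FF_q$ given by $\phi(f) = f(\bp)$. This is $\FF_q$-linear, and the event $\{f(\bp)=0\}$ is precisely the kernel of $\phi$. Since $V$ is equipped with the normalized counting measure, I want to show that $\ker\phi$ accounts for a $1/q$ fraction of $V$, i.e. $|\ker\phi| = |V|/q$.

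First I would observe that $\phi$ is surjective: this is the $r=1$ case of Lemma \ref{lem: lem2}, since $V$ contains functions. (Alternatively, one can note directly that $V$ contains a polynomial representing the constant function $1$, so $\phi$ hits $1_{\FF_q}$ and hence, by linearity, all of $\FF_q$.) By the rank-nullity theorem for the $\FF_q$-linear surjection $\phi \colon V \twoheadrightarrow \FF_q$, we have $\dim_{\FF_q} \ker\phi = \dim_{\FF_q} V - 1$. Because $V$ is a finite-dimensional vector space over $\FF_q$, cardinalities are determined by dimension via $|V| = q^{\dim V}$, so
\[
|\ker\phi| = q^{\dim V - 1} = \frac{|V|}{q}.
\]

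Finally I would conclude probabilistically. Under the normalized counting measure on the finite set $V$,
\[
P\bigl(f(\bp)=0\bigr) = \frac{|\{f \in V \mid f(\bp)=0\}|}{|V|} = \frac{|\ker\phi|}{|V|} = \frac{q^{\dim V - 1}}{q^{\dim V}} = \frac{1}{q},
\]
as claimed. I do not anticipate a genuine obstacle here: the only point requiring care is the invocation of surjectivity of the one-point evaluation map, which is where the \emph{contains functions} hypothesis enters and which is already supplied by Lemma \ref{lem: lem2}. It is worth flagging that this lemma is the field analogue of Lemma \ref{lem: ringvanishingprob}, with rank-nullity over $\FF_q$ playing the role of the explicit basis-modification argument used in the ring setting; the cleaner linear-algebra proof is available precisely because we are now over a field and need no change-of-basis trick to handle the constant term.
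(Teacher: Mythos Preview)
Your proof is correct and follows essentially the same approach as the paper: define the evaluation map at $\bp$, observe it is surjective, apply rank--nullity to obtain $\dim\ker = \dim V - 1$, and conclude $P(f(\bp)=0) = q^{\dim V - 1}/q^{\dim V} = 1/q$. The only cosmetic difference is that you justify surjectivity via the $r=1$ case of Lemma~\ref{lem: lem2}, whereas the paper simply asserts it as ``clearly surjective''; your extra commentary comparing this to the ring-setting Lemma~\ref{lem: ringvanishingprob} is accurate but not part of the paper's proof.
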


\begin{proof}
For all $\bp \in \FF_q^n$, define the evaluation map
\[
\mathrm{ev}_{\bp}: V \to \FF_q, \qquad \mathrm{ev}_{\bp}(f) = f(\bp).
\]

Since the map $\mathrm{ev}_{\bp}$ is clearly surjective, the space $\ker \ \mathrm{ev}_{\bp}$ of polynomials which vanish at $\bp$ has dimension $\dim V - 1$ by the rank-nullity theorem. Therefore,

\[
P \left( f(\bp) = 0 \right) = \frac{|\ker \ \mathrm{ev}_{\bp}|}{|V|} = \frac{q^{\dim V - 1}}{q^{\dim V}} = \frac{1}{q}.
\]
\end{proof}

\begin{lemma}\label{lem: niceref}
Let $V \subset \FF_q[\overline{X}_n]$ be a finite-dimensional vector space. Then, for any positive integer $r \le q^n$, there are $q^{\dim V - r}$ polynomials in $V$ which vanish at any $r$ points $\bp_1 \cdots \bp_r \in \FF_q^n$ if and only if $V$ contains functions.
\end{lemma}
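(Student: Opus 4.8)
The plan is to prove both directions by identifying the set of polynomials in $V$ vanishing at $\bp_1, \ldots, \bp_r$ with the kernel of the evaluation map
\[
\phi: V \to \FF_q^{r}, \qquad \phi(f) = (f(\bp_1), \ldots, f(\bp_r)),
\]
from Lemma \ref{lem: lem2}, and then applying the rank-nullity theorem. The whole argument rests on the fact that Lemma \ref{lem: lem2} precisely controls the rank of $\phi$ via the ``contains functions'' hypothesis, so the dimension count of $\ker \phi$ is forced in either direction.

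For the forward implication, I would assume $V$ contains functions. Then Lemma \ref{lem: lem2} tells us $\phi$ is surjective, so its image is all of $\FF_q^r$, which has dimension $r$. (In particular, taking $r = q^n$ shows $\dim V \ge q^n \ge r$, so the exponent $\dim V - r$ is a nonnegative integer and the claimed count is well-defined.) By rank-nullity, $\dim \ker \phi = \dim V - r$, and since the polynomials vanishing at $\bp_1, \ldots, \bp_r$ are exactly the elements of $\ker \phi$, there are $|\ker \phi| = q^{\dim V - r}$ of them, as claimed. This holds for every $r \le q^n$ and every choice of distinct points.

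For the reverse implication, I would argue by contraposition: suppose $V$ does not contain functions. Then by Lemma \ref{lem: lem2}, the map $\phi$ fails to be surjective for every $r \le q^n$, so $\dim(\mathrm{im}\,\phi) \le r - 1$ for each such $r$. Rank-nullity then gives $\dim \ker \phi \ge \dim V - r + 1$, so the number of polynomials in $V$ vanishing at $\bp_1, \ldots, \bp_r$ is at least $q^{\dim V - r + 1} > q^{\dim V - r}$. Thus the exact count $q^{\dim V - r}$ fails, and the biconditional follows.

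I do not expect a serious obstacle here, since both directions reduce to rank-nullity once Lemma \ref{lem: lem2} is invoked; the only points requiring care are the bookkeeping of the biconditional's quantifier structure (the count statement is asserted for all $r$ and all point configurations) and confirming that $\dim V \ge q^n$ in the ``contains functions'' case so that the stated exponent is genuinely nonnegative. The mild subtlety worth flagging is that the reverse direction uses the full strength of Lemma \ref{lem: lem2}'s ``for all $r \le q^n$'' conclusion, including the boundary case $r = q^n$, to guarantee that non-surjectivity propagates to every $r$.
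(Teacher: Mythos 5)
Your proof is correct and follows essentially the same route as the paper: identify the vanishing set with $\ker\phi$ for the evaluation map of Lemma \ref{lem: lem2} and apply rank-nullity. If anything, your explicit contrapositive for the reverse direction (non-surjectivity forces $\dim\ker\phi \ge \dim V - r + 1$, so the count strictly exceeds $q^{\dim V - r}$) is spelled out more carefully than the paper's terse ``exactly when'' step.
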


\begin{proof}
Note that the space of polynomials vanishing on $\bp_1 \cdots \bp_r \in \FF_q^n$ is precisely the kernel of the map
\[
V \to \FF_q^{r} \qquad \phi_r(f) = (f(\bp_{1}), \cdots , f(\bp_{r})).
\]
By the rank-nullity theorem,
\[
\dim \ker \phi_r= \dim V - \dim \mathrm{im} \ \phi_r.
\]
Now, by Lemma \ref{lem: lem2}, we have
\begin{equation}\label{eqn:kerim}
\dim \ker \phi_r= \dim V - r
\end{equation}
exactly when $V$ contains functions. Finally, $\ker \phi_r$ is a vector space over $\FF_q$, so (\ref{eqn:kerim}) implies 
\[
|\ker \phi_r| = q^{\dim V -r}
\]
as desired.
\end{proof}

The following corollary states that if $f$ is a random polynomial from a finite-dimensional vector space $V$ which contains functions, then the events of $f$ vanishing at any distinct points $\bp_1, \cdots , \bp_r$ are independent.

\begin{corollary}\label{cor: nice}
Let $V \subset \FF_q[\overline{X}_n]$ be a finite-dimensional vector space. For a random polynomial $f \in V$, define $E_j$ as the event of $f$ vanishing on a fixed $\bp_j \in \FF_q^n$. Then, for any $I \subseteq \{1, \cdots, q^n \}$, the events $\{E_i \}_{i \in I}$ are independent exactly when $V$ contains functions.
\end{corollary}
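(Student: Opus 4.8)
The plan is to verify the definition of mutual independence directly: a family $\{E_i\}_{i \in I}$ is independent precisely when, for every subfamily $J \subseteq I$, one has $P(\bigcap_{j \in J} E_j) = \prod_{j \in J} P(E_j)$. Since the indices in $I$ name distinct points $\bp_j \in \FF_q^n$, every such $J$ corresponds to a collection of $|J|$ distinct points, and the event $\bigcap_{j \in J} E_j$ is exactly the event that a random $f \in V$ vanishes simultaneously at those $|J|$ points. The corollary therefore reduces to comparing the joint vanishing probabilities with the products of the marginal ones, and both quantities have already been computed in the preceding lemmas.

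For the forward implication, I would assume $V$ contains functions. Then Lemma \ref{lem: lem3} gives $P(E_j) = 1/q$ for each $j$, while Lemma \ref{lem: niceref} gives that the number of $f \in V$ vanishing at the $|J|$ distinct points indexed by $J$ is $q^{\dim V - |J|}$, so that $P(\bigcap_{j\in J} E_j) = q^{\dim V - |J|}/q^{\dim V} = q^{-|J|}$. Comparing, $P(\bigcap_{j\in J} E_j) = q^{-|J|} = \prod_{j\in J}(1/q) = \prod_{j\in J} P(E_j)$ for every $J \subseteq I$, which is exactly independence. This direction is routine once the two lemmas are in hand.

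For the converse I would argue by contraposition: assuming $V$ does not contain functions, I want to produce a subfamily on which the product rule fails. By the ``only if'' direction of Lemma \ref{lem: niceref}, there is some $r$ and some choice of $r$ distinct points at which the number of vanishing polynomials is not $q^{\dim V - r}$; taking $I$ to be the full index set $\{1, \dots, q^n\}$ and $J$ the corresponding subfamily, the joint probability $P(\bigcap_{j \in J} E_j)$ then differs from $q^{-r}$. To convert this into a genuine failure of independence I must control the marginals $P(E_j)$, and here I would use that each evaluation map $\mathrm{ev}_{\bp_j}: V \to \FF_q$ is $\FF_q$-linear, so its image is either $\{0\}$ or all of $\FF_q$, forcing $P(E_j) \in \{1, 1/q\}$; comparing $P(\bigcap_{j\in J} E_j)$ against $\prod_{j\in J} P(E_j)$ should then exhibit the discrepancy.

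The main obstacle is precisely this last step. Unlike the forward direction, the converse cannot invoke Lemma \ref{lem: lem3} to fix the marginals at $1/q$, so one must instead track the individual evaluation maps and isolate the exact subfamily $J$ on which the product formula breaks. The delicate point is to ensure that the deviation in the joint count guaranteed by Lemma \ref{lem: niceref} genuinely forces a failure of the product rule and is not ``absorbed'' by marginals $P(E_j)$ that happen to equal $1$ (i.e.\ points at which every polynomial of $V$ vanishes); pinning down this bookkeeping is where the real care is required.
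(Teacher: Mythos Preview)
Your forward implication is exactly the paper's argument: apply Lemma~\ref{lem: niceref} to get $P\bigl(\bigcap_{j\in J} E_j\bigr)=q^{-|J|}$ for every $J\subseteq I$, invoke Lemma~\ref{lem: lem3} for the marginals $P(E_j)=1/q$, and multiply. The paper's own proof stops there; it does not address the converse at all, despite the ``exactly when'' in the statement.

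Your caution about the converse is well placed, and the obstacle you isolate is in fact fatal rather than merely delicate. If every $f\in V$ vanishes identically on $\FF_q^n$---for instance $V=\{0\}$, or $V=\FF_q\cdot(x_1^q-x_1)$---then $P(E_j)=1$ for every $j$, so the events $\{E_i\}_{i\in I}$ are trivially independent for every $I$, yet $V$ certainly does not contain functions. Hence the ``only if'' direction is false as stated, and no amount of bookkeeping with Lemma~\ref{lem: niceref} will rescue it: the deviation in the joint count really can be absorbed by marginals equal to $1$, precisely as you feared. In short, your forward direction matches the paper, and your hesitation on the converse is more scrupulous than the paper itself, which simply omits that direction.
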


\begin{proof}
By Lemma \ref{lem: niceref}, for any $I := \{a_1, \cdots, a_r \} \subseteq \{1, \cdots, q^n \}$, we have 
\[
P(E_{a_1}, \cdots , E_{a_r}) = \frac{q^{\dim V-r}}{q^{\dim V}} = q^{-r}, 
\]
and $P(E_{a_i}) = q^{-1}$ for any $a_i \in I$. Combining the two, \[ P(E_{a_1}, \cdots , E_{a_r}) = \prod_{i=1}^{r} P(E_{a_i}), \] implying that the events $\{E_i \}_{i \in I}$ are independent as desired.
\end{proof}

We are ready to prove the main result.

\noindent
\textbf{Proof of Theorem \ref{thm:main1}.} 
We first assume that $V$ is finite-dimensional. Let $\FF_q^n = \{\bp_1, \cdots \bp_{q^n} \}$, and for all $1 \le j \le  q^n$, define the indicator random variable

\[
    X_{j}  = \begin{cases}
        1 & f_1(p_j) = \cdots = f_m(p_j) = 0,\\
        0, & \text{otherwise}.
    \end{cases}
\]
Note that the $X_{j}$ are Bernoulli trials, where $X_{j} = 1$ if the $f_1, \cdots , f_m$ simultaneously vanish at $\bp_j$, and $X_{j} = 0$ if they do not. We claim that the random variables $X_1, \cdots , X_{q^n}$ are iid with parameter $1/q^{m}$. 

First, we will show that, for any $1 \le j \le q^n$, $\ X_j$ has parameter $1/q^m$. By Lemma \ref{lem: lem3}, each polynomial $f_i$ vanishes at $\bp_j$ with probability $1/q$. Since the $f_i$ are chosen independently, we have
\[
P(f_1(\bp_j) = 0, \cdots ,  f_m(\bp_j) = 0) = \prod_{i=1}^{m}P(f_i(\bp_j)) = \frac{1}{q^m}
\]
as desired. On the other hand, the $X_i$ are also independent, since, by the same reasoning as before, the events of two distinct random polynomials $f_{a_1}, f_{a_2}$ vanishing at any point $\bp \in \FF_q^n$ are independent of one another, and by Corollary \ref{cor: nice}, the events of a fixed random polynomial $f_i$ vanishing at any set of points $\bp_1, \cdots , \bp_r \in \FF_q^n$ are independent. The claim is proven, and because
\[
\sum_{j=1}^{q^n}X_i
\]
is exactly the number of common zeros of $f_1, \cdots , f_m$, the result is also shown.

Now, suppose that $V$ is infinite-dimensional. For all $r \in \ZZ_{\ge 0}$, define the set
\[
P_{m,r} = \{(f_1, \cdots , f_m) \in V^m \mid f_1, \cdots , f_m \text{ have $r$ common distinct zeros} \}.
\]
Note that
\[
|P_{m,r} \ \cap \ V_t^m| = |\{(f_1, \cdots , f_m) \in V_t^m \mid f_1, \cdots , f_m \text{ have $r$ common zeros} \}|,
\]
therefore
\[
d_m(P_{m,r}) = \lim_{t \to \infty}\frac{|P_{m,r} \ \cap \ V_t^m|}{|V_t^m|} = \lim_{t \to \infty} P(X_{\FF_q}(V_t, n, m) = r).
\]

By assumption, there exists some $\ell \in \NN$ such that for all $t \ge \ell$, $V_t$ contains functions, thus
\[
\lim_{t \to \infty} P(X_{\FF_q}(V_t, n, m) = r) = \binom{q^n}{r} \left( 1/q^m \right)^r \left( 1 - 1/q^m \right)^{q^n - r}
\]
by the finite-dimensional case. \qed


\begin{remark}\label{rmk: ringrmk}
One may wonder if there exists a finite commutative ring $R$ which is not a field such that the distribution of $X_R(M,n,m)$ is binomial with parameters $|R|^n$ and $1/|R|^m$ for some $M \subseteq R[\overline{X}_n]$. The answer to this question is negative for the following reason. Unless $R$ is a finite field, there will always be a function $R^n \to R$ which cannot be represented by a polynomial in $R[\overline{X}_n]$. For example, define the function $\eta: R \to R$ as
\[
    \eta(r)  = \begin{cases}
        0_R & r = 0_R,\\
        1_R, & r \ne 0_R.
    \end{cases}
\]
If $\eta$ were represented as a polynomial $\eta(x) = a_1x + \cdots a_nx^n$, we would have
\[
\eta(r) = r(a_1 + \cdots + a_nr^{n-1}) = 1_R
\]
for any nonzero $r$. That is, every nonzero element in $R$ would be invertible, making $R$ a finite field \cite{Fri99}. Therefore, there are no $R$-modules $M \subseteq R[\overline{X}_n]$ which contain functions unless $R$ is a field.
\end{remark}

\begin{corollary}\label{cor: bincor}
If $V \subseteq \FF_q[\overline{X}_n]$ contains functions, then,
\[
X_{\FF_q}(V, n, m) \sim \mathrm{Bin} \left( q^n, \frac{1}{q^m} \right).
\]
\end{corollary}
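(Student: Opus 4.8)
The plan is to deduce Corollary~\ref{cor: bincor} directly from Theorem~\ref{thm: main}, since the latter has already established that the number of common zeros is binomial with parameters $q^n$ and $1/q^m$ in both the finite- and infinite-dimensional cases. The corollary merely restates this conclusion in the notation $X_{\FF_q}(V,n,m)$ introduced in Section~\ref{sec:prelim}, so the essential content is an identification of the random variable counting common zeros with the function $X_{\FF_q}(V,n,m)$, followed by an appeal to Theorem~\ref{thm: main}.

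Concretely, I would first recall that $X_{\FF_q}(V,n,m)$ is \emph{defined} as the number of distinct common zeros of a uniformly chosen tuple $(f_1,\dots,f_m)\in V^m$ (when $V$ is finite-dimensional) or the corresponding density-theoretic quantity (when $V$ is infinite-dimensional). In the finite-dimensional case, this is exactly the random variable $\sum_{j=1}^{q^n}X_j$ analyzed in the proof of Theorem~\ref{thm: main}, where each $X_j$ indicates simultaneous vanishing of all $f_i$ at $\bp_j$. Thus I would write
\[
X_{\FF_q}(V,n,m) = \sum_{j=1}^{q^n} X_j,
\]
and since Theorem~\ref{thm: main} shows this sum is a sum of $q^n$ i.i.d.\ Bernoulli trials each with success probability $1/q^m$, the distribution is $\mathrm{Bin}(q^n, 1/q^m)$ by the standard characterization of the binomial distribution as a sum of i.i.d.\ Bernoullis.

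In the infinite-dimensional case, I would instead invoke the density-function computation at the end of the proof of Theorem~\ref{thm: main}, where it is shown that
\[
d_m(P_{m,r}) = \binom{q^n}{r}\left(1/q^m\right)^r\left(1-1/q^m\right)^{q^n-r}
\]
for every $r\in\ZZ_{\ge 0}$. Since $d_m(P_{m,r})$ is by construction the density of the event $\{X_{\FF_q}(V,n,m)=r\}$, this is precisely the statement that $X_{\FF_q}(V,n,m)$ has a binomial law with the claimed parameters (interpreted in the density sense as discussed in Section~\ref{sec:prelim}). Either way, the corollary follows by unwinding definitions.

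The main subtlety—though it is more a matter of careful bookkeeping than a genuine obstacle—is ensuring that the notational object $X_{\FF_q}(V,n,m)$ agrees with the random variable actually studied in Theorem~\ref{thm: main}, and that the meaning of the symbol $\sim$ is understood as ordinary distributional equality when $V$ is finite-dimensional but as equality of densities when $V$ is infinite-dimensional, consistent with the remark in Section~\ref{sec:prelim} that the theorems concern density rather than probability in the infinite case. Since no new computation is required, I expect the proof to be essentially a one- or two-sentence reference to Theorem~\ref{thm: main} after this identification is made explicit.
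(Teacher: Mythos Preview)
Your proposal is correct and takes essentially the same approach as the paper: the paper's proof is the single sentence ``This is immediate from Theorem~\ref{thm: main},'' and your write-up simply unpacks what that immediacy means by identifying $X_{\FF_q}(V,n,m)$ with the zero-counting random variable treated there. Your additional care in distinguishing the finite- and infinite-dimensional interpretations is sound but goes beyond what the paper itself records.
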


\begin{proof}
This is immediate from Theorem \ref{thm:main1}.
\end{proof}

\begin{corollary}\label{cor:poisson}
As $q \to \infty$, the number of common zeros of a system of $n$ (not-necessarily distinct) random polynomials in $\FF_q[\overline{X}_n]$ follows a Poisson distribution with parameter $1$.
\end{corollary}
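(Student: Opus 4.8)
The plan is to obtain the Poisson limit as a direct application of Corollary \ref{cor: bincor} together with the classical Poisson limit theorem (the law of rare events). First I would observe that taking $m = n$ and $V = \FF_q[\overline{X}_n]$, which certainly contains functions, Corollary \ref{cor: bincor} tells us that the number of common zeros $X_{\FF_q}(\FF_q[\overline{X}_n], n, n)$ is distributed as $\mathrm{Bin}(q^n, 1/q^n)$. The key point is that this is a binomial distribution whose number of trials $N_q = q^n$ tends to infinity while its success probability $p_q = 1/q^n$ tends to zero, in such a way that the product $N_q \, p_q = 1$ remains constant (and equal to the parameter of the claimed Poisson limit).

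The central step is then to invoke the standard Poisson approximation to the binomial: if $X_q \sim \mathrm{Bin}(N_q, p_q)$ with $N_q \to \infty$ and $N_q\, p_q \to \lambda$, then $X_q$ converges in distribution to a Poisson random variable with parameter $\lambda$. Here $N_q\, p_q = q^n \cdot q^{-n} = 1$ for every $q$, so the hypothesis holds trivially with $\lambda = 1$. Concretely, I would fix $r \in \ZZ_{\ge 0}$ and examine the pointwise limit of the probability mass function
\[
P\!\left(X_{\FF_q}(\FF_q[\overline{X}_n], n, n) = r\right) = \binom{q^n}{r}\left(\frac{1}{q^n}\right)^r\left(1 - \frac{1}{q^n}\right)^{q^n - r},
\]
showing that as $q \to \infty$ this tends to $e^{-1}/r!$, the Poisson$(1)$ mass at $r$. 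The standard estimates are $\binom{q^n}{r}(q^n)^{-r} \to 1/r!$ (since the falling factorial $q^n(q^n-1)\cdots(q^n-r+1)$ is asymptotic to $(q^n)^r$ for fixed $r$), $\left(1 - q^{-n}\right)^{q^n} \to e^{-1}$, and $\left(1 - q^{-n}\right)^{-r} \to 1$.

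I do not anticipate a genuine obstacle here, since this is the textbook derivation; the only point requiring a word of care is that the limit is taken over prime powers $q = p^r$ rather than over all positive integers, but the elementary limits above hold verbatim along any sequence $q \to \infty$, so restricting to prime powers changes nothing. I would close by noting that convergence of the mass functions at each fixed $r$ is precisely convergence in distribution for integer-valued random variables, which yields the stated Poisson limit.
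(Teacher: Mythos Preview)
Your proposal is correct and follows essentially the same approach as the paper: invoke Corollary \ref{cor: bincor} with $m = n$ to obtain a $\mathrm{Bin}(q^n, q^{-n})$ distribution and then pass to the Poisson limit. If anything, your argument is more carefully written than the paper's own proof, which jumps directly to the Poisson mass function $e^{-q^{n-m}} q^{k(n-m)}/k!$ without displaying the intermediate binomial asymptotics you spell out.
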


\begin{proof}
By Corollary \ref{cor: bincor}, as $q \to \infty$, the asymptotic density of the systems of polynomials in $\FF_q[\overline{X}_n]^m$ with $k$ solutions is given by
\[
\lim_{q \to \infty} P\left( X_{\FF_{q}}(\FF_{q}[\overline{X_n}], n, m) = k  \right) = \lim_{q \to \infty} \frac{e^{-q^{n-m}}q^{k(n-m)}}{k!} = \frac{e^{-1}}{k!}
\]
as desired.
\end{proof}

\section{Proof of Theorem \ref{thm:ringmain1}}\label{sec:ringsec}

In this section, we will prove Theorem \ref{thm:ringmain1}, computing the average number of zeros of a random system of polynomials in an $R$-module $M \subseteq R[X_n]$ that extends $R$. 

\begin{lemma}\label{lem: Np}
Suppose $K \subseteq  R[\overline{X}_n]$ is a finitely generated $R$-module that extends $R$ of rank $k$. Then, for each $\bp \in R^n$, the space
\[
N_\bp := \left\{ f \in K \mid f(\bp) = 0 \right\}
\]
of polynomials in $K$ which vanish at $\bp$ has cardinality $|R|^{k - 1}$. 
\end{lemma}

\begin{proof}
First, since $R$ is contained in $K$, $K$ contains the multiplicative identity $1_R$ of $R$. In particular, the linearly independent set $\{1_R\}$ can be extended to a basis
\[
\mathcal{B}_K = \{ 1_R, r_1, \cdots , r_{k-1} \} \subset K
\]
of $K$. Now, choose $c_1, \cdots , c_{k-1} \in R$ such that, for all $1 \le i \le k-1, \ r_i - c_i$ vanishes at $\mathbf{0}$. We claim that
\[
\mathcal{B}_K' = \{ 1_R, r_1-c_1, \cdots , r_{k-1}-c_{k-1} \} \subset K
\]
is a basis of $K$. First, to show that the elements in $\mathcal{B}_K'$ are linearly independent, suppose 
\[
a_0 \cdot 1_R + \sum_{i=1}^{k-1}a_i(r_i - c_i) = 0
\]
for some $a_0, \cdots , a_{k-1} \in R$. Then, rearranging,
\begin{equation}\label{eqn:indep}
\left( a_0 -\sum_{i=1}^{k-1}a_ic_i \right) \cdot 1_R + \sum_{i=1}^{k-1}a_ir_i = 0.
\end{equation}

Since $1_R, r_1, \cdots, r_{k-1}$ are linearly independent, (\ref{eqn:indep}) implies
\[
a_0 -\sum_{i=1}^{k-1}a_ic_i = 0, \qquad a_1= \cdots = a_{k-1} = 0.
\]
But $\sum_{i=1}^{k-1}a_ic_i = 0$, so we also have $a_0 = 0$. Therefore, $\mathcal{B}_K'$ is a linearly independent set with cardinality $\mathrm{rank} \ K = k$. In particular, $\mathcal{B}_K'$ is a basis of $K$, proving the claim.

Then, note that the $R$-module $N_{\mathbf{0}}$ of polynomials which vanish at $\mathbf{0} := (0, \cdots , 0)$ is generated by $\mathcal{B}_K' \setminus \{1_R \}$, so $\mathrm{rank} \ N_{\mathbf{0}} = k-1$. Since $N_{\mathbf{0}}$ is an $R$-module, this implies
\[
|N_{\mathbf{0}}| = |R|^{k-1}.
\]
Then, for all $\bp \in R^n$, define 
\[
\phi_\bp: N_{\mathbf{0}} \to N_\bp, \qquad \phi_\bp(f) := f - f(\bp).
\]
Since $\phi_\bp$ has an inverse map
$\phi^{-1}_\bp(g) := g - g(\mathbf{0})$, it is bijective. Therefore, $|N_\bp| = |R|^{k - 1}$ for all $\bp \in R^n$.
\end{proof}

\begin{lemma}\label{lem: ringvanishingprob}
Let $K \subseteq R[\overline{X}_n]$ be a finitely generated $R$-module with rank $k$ that extends $R$. For a random polynomial $f \in K$ and any point $\bp \in R^n$, we have
\[
P(f(\bp) = 0) = \frac{1}{|R|}.
\]
\end{lemma}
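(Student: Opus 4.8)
The plan is to reduce the probabilistic claim to the cardinality count already carried out in Lemma \ref{lem: Np}. Since $K$ is a finitely generated $R$-module that extends $R$, it is a measurable space equipped with the normalized counting measure $\mu$, under which the probability of any event $A \subseteq K$ is simply $\mu(A) = |A|/|K|$. The first step is therefore to observe that the event whose probability we seek, namely $\{ f \in K \mid f(\bp) = 0 \}$, is by definition exactly the set $N_\bp$ introduced in Lemma \ref{lem: Np}.

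With this identification in hand, the computation is immediate. I would write
\[
P(f(\bp) = 0) = \mu(N_\bp) = \frac{|N_\bp|}{|K|},
\]
and then substitute the two cardinalities. Lemma \ref{lem: Np} gives $|N_\bp| = |R|^{k-1}$ for every $\bp \in R^n$, and since $K$ is free of rank $k$ we have $|K| = |R|^k$. Dividing yields
\[
P(f(\bp) = 0) = \frac{|R|^{k-1}}{|R|^k} = \frac{1}{|R|},
\]
which is the desired conclusion, uniformly in the choice of $\bp$.

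Honestly, there is no real obstacle remaining at this stage: the substantive work — constructing the modified basis $\mathcal{B}_K'$ and exhibiting the bijection $\phi_\bp$ to show $|N_\bp|$ is independent of $\bp$ and equal to $|R|^{k-1}$ — was all absorbed into Lemma \ref{lem: Np}. The only points worth stating carefully are that the normalized counting measure is precisely what makes $P(f(\bp)=0)$ equal to the quotient of cardinalities, and that the count $|N_\bp| = |R|^{k-1}$ holds for \emph{every} $\bp$, so that the resulting probability $1/|R|$ does not depend on the chosen point. This uniformity is exactly the feature that will be needed downstream when summing over all $\bp \in R^n$ to compute the expected number of common zeros in Theorem \ref{thm: ringmain1}.
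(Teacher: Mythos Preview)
Your proposal is correct and follows exactly the same approach as the paper: identify the event with $N_\bp$, invoke Lemma~\ref{lem: Np} for $|N_\bp| = |R|^{k-1}$, and divide by $|K| = |R|^k$. The additional commentary on the normalized counting measure and the uniformity in $\bp$ is accurate and helpful context, but the argument itself is the paper's.
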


\begin{proof}
Defining $N_\bp := \left\{ f \in K \mid f(\bp) = 0 \right\}$ as above, $|N_{\mathbf{\bp}}| = |R|^{k-1}$  by Lemma \ref{lem: Np}, we have
\[
P(f(\bp) = 0) = \frac{|N_\bp|}{|K|} = \frac{|R|^{k - 1}}{|R|^{k}} = \frac{1}{|R|}
\] 
as desired.
\end{proof}

\begin{lemma}\label{lem: ringvanishingprob2}
Let $M \subseteq R[\overline{X}_n]$ be an infinitely generated $R$-module that extends $R$, and define $N_{\mathbf{p}} := \{f \in M \mid f( \mathbf{p}) = 0 \}$. Then, for all $\mathbf{p} \in R$, the density of $N_{\mathbf{p}}$ in $M$ is $1/|R|$.
\end{lemma}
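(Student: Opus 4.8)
The plan is to reduce this infinitely generated statement to the finitely generated case already established in Lemma \ref{lem: ringvanishingprob}, by unwinding the definition of the density function and observing that the relevant counting ratio is independent of the truncation level.

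First I would apply the definition of the $1$-density to $N_{\mathbf{p}}$: by definition,
\[
d_1(N_{\mathbf{p}}) = \lim_{t \to \infty} \frac{|N_{\mathbf{p}} \cap M_t|}{|M_t|},
\]
where $M_t \subseteq M$ is the finitely generated submodule of polynomials of total degree at most $t$. The key observation is that
\[
N_{\mathbf{p}} \cap M_t = \{ f \in M_t \mid f(\mathbf{p}) = 0 \}
\]
is precisely the vanishing set associated to the \emph{finitely generated} module $M_t$ in the sense of Lemma \ref{lem: Np} and Lemma \ref{lem: ringvanishingprob}.

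Next I would invoke the structural remark following the definition of the $m$-density: since $M$ extends $R$, each $M_t$ is free and finitely generated, and $M_t$ extends $R$ for all $t \in \NN$. Writing $k_t := \mathrm{rank}\, M_t$, Lemma \ref{lem: ringvanishingprob} applied with $K = M_t$ then gives
\[
\frac{|N_{\mathbf{p}} \cap M_t|}{|M_t|} = \frac{|R|^{k_t - 1}}{|R|^{k_t}} = \frac{1}{|R|}
\]
for every $t \geq 1$. Hence the sequence whose limit defines $d_1(N_{\mathbf{p}})$ is constant in $t$ (for $t \geq 1$), so the limit exists and equals $1/|R|$, which is the claim.

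Since the argument is essentially a repackaging of the finite case, the only point requiring care is the structural input that each truncation $M_t$ is again a free, finitely generated module extending $R$; this is exactly what the remark after the $m$-density definition records, so no new work is needed. The one boundary case worth a remark is the degenerate level $t = 0$, where $M_0 = R$ fails to extend $R$, but as the density is a limit as $t \to \infty$ this level is irrelevant (and in any event the ratio already equals $1/|R|$ there, since the only constant vanishing at $\mathbf{p}$ is $0$).
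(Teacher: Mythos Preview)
Your proposal is correct and follows essentially the same approach as the paper: unwind the definition of $d_1$, identify $N_{\mathbf p}\cap M_t$ as the vanishing set for the finitely generated truncation $M_t$, and apply Lemma~\ref{lem: ringvanishingprob} to see the ratio is $1/|R|$ for every $t\ge 1$. Your version is in fact slightly more careful in tracking the rank $k_t$ of $M_t$ rather than conflating it with $t$, and in flagging the degenerate level $t=0$.
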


\begin{proof}
We have
\[
d_1(N_{\mathbf{p}}) = \lim_{t \to \infty}\frac{|N_{\mathbf{p}} \ \cap \ M_t|}{|M_t|} = \lim_{t \to \infty}\frac{\{f \in M_t \mid f( \mathbf{p}) = 0 \}|}{|R|^t} = \lim_{t \to \infty}\frac{R^{t-1}}{|R|^t} = \frac{1}{|R|}
\]
by Lemma \ref{lem: ringvanishingprob}.
\end{proof}

\noindent
\textbf{Proof of Theorem \ref{thm:ringmain1}.} 
For all $\bp \in R^n$, define the random variable
\[
    V_{\bp}  := \begin{cases}
        1 & f_1(\bp) = \cdots = f_m(\bp) = 0,\\
        0, & \text{ otherwise}.
    \end{cases}
\]
Since the polynomials $f_1, \cdots , f_m$ are selected independently, the events $f_i(\bp) = 0$ and $f_j(\bp) = 0$ are independent for all $i \ne j$, and
\[
P \left(V_{\bp} = 1 \right) = \prod_{i=1}^{m}P(f_i(\bp )=0).
\]

If $K$ is finitely generated, then by Lemma \ref{lem: ringvanishingprob},
\[
P(f_i(\bp )=0) = \frac{1}{|R|},
\]
and if $K$ is infinitely generated, then by Lemma \ref{lem: ringvanishingprob2},
\[
P(f_i(\bp )=0) = d_1(N_{\mathbf{p}}) = \frac{1}{|R|}
\]
for all $1 \le i \le m$. Therefore, $P \left(V_{\bp} = 1 \right) = 1/|R|^m$ in both cases.

Now, the number of common zeros of the $f_i$ is given by $\sum_{\bp \in R^n}V_{\bp}$, thus by the linearity of expectation, 
\begin{equation}\label{eqn:ringexp}
\mathbb{E} \left( \sum_{\bp \in R^n}V_{\bp} \right) = \sum_{\bp \in R^n}\mathbb{E}\left( V_{\bp} \right).   
\end{equation}

Since $\mathbb{E}\left( V_{\bp} \right) = P \left(V_{\bp} = 1 \right) = 1/|R|^m$, (\ref{eqn:ringexp}) simplifies to
\[
 \sum_{\bp \in R^n} \frac{1}{|R|^m} = |R|^{n-m}
\]
as desired. \qed

\begin{remark}
For all $0 \le r \le |R|^n$, let $A_r \subseteq K^m$ denote the set of $m$-tuples of polynomials with exactly $r$ common zeros. If $K$ is finitely generated, Theorem \ref{thm:ringmain1} states
\[
\frac{1}{|R|^n+1}\sum_{r=0}^{|R|^n}\mu(A_r) = |R|^{n-m},
\]
and if $K$ is infinitely generated, Theorem \ref{thm:ringmain1} states
\[
\frac{1}{|R|^n+1}\sum_{r=0}^{|R|^n}d_m(A_r) = |R|^{n-m}.
\]
\end{remark}

\section{Future Work}\label{sec:conclusion}
In the future, it will be interesting to explore applications of the above results in cryptography. For instance, the security of many cryptography algorithms depend on the NP-completeness of the problem of finding solutions to a random system of polynomials over a finite field \cite{LPTV24}. While the above results do not provide a deterministic method of finding the solutions to a system of polynomials, they provide a good heuristic for the number of solutions we can expect. For example, according to Theorem \ref{thm:main1}, three random polynomials $f_1, f_2 , f_3 \in \FF_{19}[\overline{X}_2]$ has at most one solution with probability $\approx 0.9987$. Therefore, if we obtain a single solution, it is very likely to be the only one. 

Multivariable polynomials over finite fields and rings are also of pure mathematical interest, being linked to the factorization of random polynomials over $\QQ$, polynomials over the $p$-adic integers $\ZZ_p$ via the inverse limit $\varprojlim \ZZ/p^k \ZZ = \ZZ_p$, and the geometry of complex varieties. See \cite{BV19, Shm21, Del74}. In the future, it will be interesting to explore these connections further.

It might be possible to obtain more geometrically natural specializations of Theorems \ref{thm:main1} and \ref{thm:ringmain1}. For example, what is the distribution of the number of $\FF_q$-rational points on a random \emph{complete intersection} $V \subset \AA^{n}(\FF_q)$? How many rational points can we expect on a projective variety $V \subset \PP^n(\FF_q)$ defined by $m$ random homogenous polynomials $f_1, \cdots , f_m \in \FF_q[\overline{X_n}]$? In \cite{JMW25}, the latter expectation was found to be approximately $q^{n-1}$ in the case of $m = 1$ polynomials, essentially relying on the argument given in the proof of Lemma \ref{lem: lem3}. Can we more generally compute the expectation and probability distribution in the case of projective varieties for all $m \in \NN$?

\acknowledgement
The author would like to thank Han-Bom Moon for many helpful conversations, and his continued support in the writing of this paper. He is also grateful to Fordham University, at which this work was completed.

\end{document}